\numberwithin{equation}{section}
\newtheorem{theorem}{Theorem}
\newtheorem{lemma}[theorem]{Lemma}
\newcommand{\field}[1]{\mathbb{#1}}
\newcommand{\id}{{\iota}}
\newcommand{\ot}{{\otimes}}
\newcommand{\om}{{\omega}}
\newcommand{\LL}{{\mathcal{L}^{\infty}(\G)}}
\newcommand{\PG}{{\mathcal{P}(\G)}}
\newcommand{\CZ}{{\mathcal{C}_0(\G)}}
\newcommand{\G}{\mathbb G}
\title 
{A Limit Theorem for Discrete Quantum Groups}
\author{Mehrdad Kalantar}
\address{ School of Mathematics and Statistics,
Carleton University, Ottawa, ON, Canada}
\email{mkalanta@math.carleton.ca}
\subjclass[2000]{Primary 46L53, 46L89.}
\begin{document}
\begin{abstract}
We consider the {\it concentration functions problem}
for discrete quantum groups;
we prove that if $\G$ is a discrete quantum group,
and $\mu$ is an irreducible state in $l^1(\G)$,
then the convolution powers $\mu^n$,
considered as completely positive maps on $c_0(\G)$,
converge to zero in strong operator topology.
\end{abstract}


\maketitle

Consider an irreducible probability measure $\mu$ on a discrete group $G$
(that is, $G$ is generated as a semi-group, by the support of $\mu$);
then under what conditions on $G$ or $\mu$, does the sequence
\begin{equation}\label{1123}
f_n(K) = \sup\,\{\mu^n (Kx^{-1}) \,:\, x\in G\, \}
\end{equation}
converges to zero for every finite set $K \subset G$\,?
This problem is known as the {\it concentration functions problem}.
In \cite{HM}, Hofmann--Mukherjea considered, and partially answered this problem
for the case of locally compact groups. In particular they proved that the sequence (\ref{1123})
converges to zero for all discrete $G$, finite $K\subseteq G$, and irreducible $\mu\in l^1(G)$.
It is not hard to see that the convergence of the concentration functions
to zero is equivalent to the convergence of the convolution operators $\mu^n$ on $c_0(\G)$, to zero
in the strong operator topology.

Here we prove a non-commutative version of this well-known
classical result for the case of discrete quantum groups (Theorem \ref{HM}).




We recall that a {\it discrete quantum group} $\G$ (in the sense of \cite{KV1}) is a
quadruple $( l^\infty(\G), \Gamma, \varphi, \psi)$, where
$\Gamma:  l^\infty(\G)\to  l^\infty(\G) \bar\otimes  l^\infty(\G)$
is a co-associative co-multiplication
on the von Neumann algebra $ l^\infty(\G)$,
and $\varphi$ and  $\psi$ are  (normal faithful semi-finite) left and right
Haar weights on $ l^\infty(\G)$, respectively.

The preadjoint of the co-multiplication $\Gamma$ induces
an associative multiplication on $ l^1(\G):=  l^\infty(\G)_*$ given by
the convolution
\[
\mu \star \nu\, =\, (\mu\otimes \nu)\,\Gamma\,.
\]
With this product $l^1(\G)$ is a completely contractive unital Banach algebra.
The convolution actions $x\star\mu:=(\id\otimes\mu)\Gamma(x)$ and $\mu\star x:=(\mu\otimes\id)\Gamma(x)$
are normal completely bounded maps on $ l^\infty(\G)$.

The \emph{reduced quantum group $C^*$-algebra} associated to $\G$ is denoted by $ c_0(\G)$, which
is a weak$^*$ dense $C^*$-subalgebra of $ l^\infty(\G)$.
Similar to the classical case, we have the duality $ l^1(\G) = c_0(\G)^{*}$.


We denote by $\PG$ the set of all `{\it quantum probability measures}' on $\G$
(i.e. all normal states in $ l^1(\G)$).
For any such element the convolution action is a \emph{Markov operator}, i.e.
a unital normal completely positive map, on $ l^\infty(\G)$.

A state $\mu\in \PG$ is \emph{irreducible}
if for every non-zero element $x\in  l^\infty(\G)^+$
there exists $n\in\field{N}$ such that
$\langle\, x\, ,\, \mu^n\,\rangle\, \neq\, 0$.








\begin{lemma}\label{0-2}
Let $\G$ be a discrete quantum group, and let $\mu\in\PG$ be irreducible.
Then there exists $k\in\mathbb{N}$ such that
\[
\lim_n \|\,\mu^{n+k} \, -\, \mu^n\,\| \, =\,0\,.
\]
\end{lemma}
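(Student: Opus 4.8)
This is a noncommutative form of the Ornstein--Sucheston $0$--$2$ law, and I would prove it in that spirit, reducing the statement to showing that for a suitable $k$ the \emph{overlap} between the states $\mu^{n}$ and $\mu^{n+k}$ has mass tending to $1$. The only soft input needed to start is that $\mu$ is a state, so each $\mu^{n}$ is again a normal state and left convolution $T\colon\nu\mapsto\mu\star\nu$ is a positive, mass-preserving map on $l^{1}(\G)$ (one checks $\langle\mu\star\nu,1\rangle=\langle\nu,1\rangle$ from $\Gamma(1)=1\otimes 1$), hence a contraction on its self-adjoint part. Writing $h_{n,k}:=\mu^{n+k}-\mu^{n}$ we have $h_{n+1,k}=T h_{n,k}$, so the sequence $\|\mu^{n+k}-\mu^{n}\|$ is non-increasing in $n$ and $\gamma_{k}:=\lim_{n}\|\mu^{n+k}-\mu^{n}\|$ exists for every $k$. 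Since the lemma only asserts existence, it suffices to produce one $k$ with $\gamma_{k}=0$ (the triangle inequality then even shows that the set of such $k$ is closed under addition).

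The classical proof is driven by the lattice operations on $L^{1}$, which are unavailable on the noncommutative space $l^{1}(\G)$; this is the chief structural obstacle, and I would circumvent it by using the Jordan decomposition of self-adjoint normal functionals. As $h_{n,k}$ is self-adjoint with $\langle h_{n,k},1\rangle=0$, it splits as $h_{n,k}=h_{n,k}^{+}-h_{n,k}^{-}$ into orthogonally supported positive functionals with $\|h_{n,k}\|=2\|h_{n,k}^{+}\|$. Two standard facts about this decomposition in a predual do the work of the lattice: minimality (if $h=a-b$ with $a,b\ge 0$ then $a\ge h^{+}$ and $b\ge h^{-}$), which gives $h_{n,k}^{-}\le\mu^{n}$ and hence a positive \emph{overlap} functional $\rho_{n,k}:=\mu^{n}-h_{n,k}^{-}$ of mass $\|\rho_{n,k}\|=1-\tfrac12\|\mu^{n+k}-\mu^{n}\|$; and the comparison $T(h^{+})\ge(Th)^{+}$, which, together with mass-preservation, reproves the monotonicity above and shows $\|\rho_{n,k}\|$ is non-decreasing in $n$, with limit $\beta_{k}=1-\tfrac12\gamma_{k}$. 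Thus $\gamma_{k}=0$ is equivalent to $\beta_{k}=1$.

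It remains to prove the dichotomy $\beta_{k}\in\{0,1\}$ and to exhibit one $k$ with $\beta_{k}>0$. For the dichotomy I would run a self-improvement (``doubling'') argument: if $\beta_{k}\in(0,1)$, fix a large $m$, split $\mu^{m}=\rho+\tau$ and $\mu^{m+k}=\rho+\sigma$ with $\rho=\rho_{m,k}$ and disjointly supported remainders $\sigma,\tau\ge 0$ of small mass, push everything forward by powers of $T$, and use complete positivity together with the mass bound to show that fresh overlap accumulates at every step, forcing the defect $1-\beta_{k}$ down to $0$. I expect this estimate to be the main difficulty, precisely because the pointwise lattice manipulations of the classical argument must here be recast through Jordan parts and positivity. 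For the second point I use irreducibility decisively: applied to the central projection $e$ onto the one-dimensional trivial (counit) block of $l^{\infty}(\G)=\bigoplus_{\alpha}B(H_{\alpha})$, it yields some $n_{0}\ge 1$ with $\langle e,\mu^{n_{0}}\rangle>0$, and supermultiplicativity of these return values ($\Gamma(e)\ge e\otimes e$ gives $\langle e,\mu^{2n_{0}}\rangle\ge\langle e,\mu^{n_{0}}\rangle^{2}$) shows $\mu^{2n_{0}}$ charges $e$ as well. Because $e$ is one-dimensional, the restrictions of $\mu^{n_{0}}$ and $\mu^{2n_{0}}$ to that block are proportional positive functionals, so their overlap is automatically nonzero; this sidesteps the noncommutative subtlety of passing from intersecting supports to positive overlap mass. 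Hence $\beta_{n_{0}}>0$, and the dichotomy forces $\beta_{n_{0}}=1$, that is $\gamma_{n_{0}}=0$, which is the desired conclusion.
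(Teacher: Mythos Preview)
Your proposal follows the same two–step architecture as the paper's proof: (i) invoke a $0$--$2$ dichotomy to reduce the question to exhibiting one $k$ with nonzero overlap between $\mu^{n}$ and $\mu^{n+k}$; (ii) manufacture that overlap from the one–dimensional counit block $e\in l^{\infty}(\G)$, where any two states charging $e$ automatically share a common piece $\lambda\,\delta_{e}$. The paper does not reprove the dichotomy but simply cites the noncommutative $0$--$2$ law of Neshveyev--Tuset \cite[Proposition~2.12]{NeshTuset}; your Jordan–decomposition/doubling sketch is precisely an outline of that proposition's proof, and you correctly identify it as the only substantive difficulty. For step~(ii) the paper argues slightly differently: rather than your supermultiplicativity $\langle e,\mu^{a+b}\rangle\ge\langle e,\mu^{a}\rangle\langle e,\mu^{b}\rangle$ (from $\Gamma(e)\ge e\otimes e$), it first shows via the right Haar weight that the map $x\mapsto x\star\mu$ is faithful, so that $\mu_{0}\star\mu^{m}$ is a faithful state for every $m$, whence infinitely many powers $\mu^{n}$ charge $e$. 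Either route produces the required indices $n,\,n+k$ with $\langle e,\mu^{n}\rangle,\langle e,\mu^{n+k}\rangle>0$. In short: your strategy is correct and essentially coincides with the paper's, the only difference being that you propose to redo the $0$--$2$ law where the paper quotes it.
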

\begin{proof}
Since $\mu$ is irreducible, it follows from the definition that
$ \mu_0:=\sum_{n=1}^\infty\,2^{-n} \mu^n$
is a faithful normal state.
Now, let $\psi$ be the right Haar weight of $\G$, then we have
\[
\psi (x\star \mu)\,1 \,=\,  (\psi\otimes \id)\,\Gamma (x\star\mu)
\,=\, \left((\psi\otimes \id)\, \Gamma(x)\right)\star \mu
\,=\, \psi(x)\,1
\]
for all $x \in l^\infty(\G)^{+}$. Since $\psi$ is faithful,
it follows that the map $x \mapsto x\star \mu$ is faithful on $l^\infty(\G)$.
This, in particular, implies that $\mu_0\star\mu^m$ is faithful on $l^\infty(\G)$
for all $m\in\mathbb N$. Thus, for every positive $x\in l^\infty(\G)$
and $n\in\mathbb N$, there are $n \leq m_1, m_2$ such that
$\langle\,\mu^{m_1},x\,\rangle\,\neq\,0$ and $\langle\,\mu^{m_2},x\,\rangle\,\neq\,0$.
Let $e\in l^\infty(\G)$ be the central minimal projection obtained from the trivial representation
of the dual compact quantum group, and suppose that $n, k \in \mathbb N$ are such that
\[
\langle\,\mu^n\,,\,e\,\rangle\,\neq\,0\ \ \ \ \ \text{and}\ \ \ \ \ \langle\,\mu^{n+k}\,,\,e\,\rangle\,\neq\,0\,.
\]
Then we have $\lambda\delta_e \leq \mu^n$ and $\lambda\delta_e \leq \mu^{n+k}$
for some positive $\lambda\in\mathbb R$,
where $\delta_e = e\psi$.
%
Hence, lemma follows from the non-commutative 0-2 law \cite[Proposition 2.12]{NeshTuset}.
\end{proof}

\begin{theorem}\label{HM}
Let $\G$ be a (infinite dimensional) discrete quantum group, and let $\mu\in\PG$ be irreducible. Then
\begin{equation}\label{l2}
\|\,x\star \mu^n\,\|\, {\longrightarrow}\,0
\end{equation}
for all $x\in c_0(\G)$.
\end{theorem}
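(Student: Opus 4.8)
The plan is to reduce to a single scalar quantity and then manufacture an invariant functional that infinite-dimensionality cannot support. First I would use the structure $l^\infty(\G)=\prod_\alpha B(H_\alpha)$ with finite-dimensional blocks and $c_0(\G)=\bigoplus_\alpha^{c_0} B(H_\alpha)$: the finitely supported elements are norm dense, and since $x\mapsto x\star\mu^n$ is unital completely positive, hence contractive, an $\varepsilon/3$ argument reduces matters to $x$ lying in a single block. Writing such an $x$ as a combination of at most four positive elements dominated by $\|x\|\,e$, where $e$ is the relevant minimal central projection, positivity yields $\|x\star\mu^n\|\le 4\|x\|\,\|e\star\mu^n\|$, so it suffices to prove $\|e\star\mu^n\|\to 0$ for every minimal central projection $e$. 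Since $\star\mu$ is contractive, $\|e\star\mu^n\|=\|(e\star\mu^{n-1})\star\mu\|$ is non-increasing, so the limit $a:=\lim_n\|e\star\mu^n\|$ exists, and the goal becomes $a=0$.

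Assume $a>0$. As $e\star\mu^n$ is a positive element of $c_0(\G)$ it attains its norm, so I may pick $\omega_n\in\PG$ with $\langle e\star\mu^n,\omega_n\rangle\ge a$ and set $\sigma_n:=\omega_n\star\mu^n\in\PG$, which gives $\sigma_n(e)=\langle e\star\mu^n,\omega_n\rangle\ge a$. Lemma~\ref{0-2} provides $k$ with $\|\mu^{n+k}-\mu^n\|\to 0$, whence $\|\sigma_n\star\mu^k-\sigma_n\|=\|\omega_n\star(\mu^{n+k}-\mu^n)\|\le\|\mu^{n+k}-\mu^n\|\to 0$; that is, the states $\sigma_n$ are asymptotically $\star\mu^k$-invariant. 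A weak$^*$ cluster point $\sigma$ of $(\sigma_n)$ in $l^\infty(\G)^*$ is then a $\star\mu^k$-invariant state with $\sigma(e)\ge a$, and averaging over $i=0,\dots,k-1$ produces a genuinely $\star\mu$-invariant state $\bar\sigma$ with $\bar\sigma(e)\ge a/k>0$. Splitting $\bar\sigma$ into its normal and singular parts, the singular part vanishes on $c_0(\G)\ni e$, while normality of $\star\mu$ keeps the splitting invariant; hence the normal part $\nu\in l^1(\G)^+$ is itself $\star\mu$-invariant with $\nu(e)=\bar\sigma(e)>0$, so $\nu\neq 0$.

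The remaining task — and the step I expect to be the main obstacle — is to rule out a nonzero $\nu\in l^1(\G)^+$ satisfying $\nu(x\star\mu)=\nu(x)$ for all $x$, i.e. a finite invariant ``measure'' for the irreducible random walk. Here I would exploit the interplay of irreducibility with the Haar weight. The support projection $p$ of $\nu$ obeys $(1-p)\star\mu\le 1-p$, hence $p\star\mu\ge p$; combining subharmonicity of $p$ with faithfulness of $\mu_0=\sum_n 2^{-n}\mu^n$ and of $\psi$, together with the invariance $\psi(x\star\mu)=\psi(x)$ from the proof of Lemma~\ref{0-2}, I would force $p$ to be $\star\mu$-harmonic and then, by irreducibility, trivial, so that $\nu$ is faithful. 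A finite faithful invariant $\nu$ is incompatible with the right Haar weight being the (essentially unique, by ergodicity of the irreducible walk) invariant weight, which for infinite-dimensional $\G$ is infinite since $\psi(1)=\sum_\alpha\psi(e_\alpha)=\infty$. Making this uniqueness precise while correctly handling the non-tracial Haar weight of a non-Kac $\G$ is the delicate point; the reductions above and the appeal to Lemma~\ref{0-2} are routine by comparison, so the crux is genuinely the nonexistence of a finite invariant measure on an infinite discrete quantum group.
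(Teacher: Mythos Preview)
Your reductions and the use of Lemma~\ref{0-2} to produce an asymptotically $\mu^k$-invariant sequence $\sigma_n=\omega_n\star\mu^n$ are sound and parallel the second half of the paper's proof. The genuine gap is exactly where you locate it: ruling out a nonzero $\nu\in l^1(\G)^+$ with $\nu\star\mu=\nu$. Your support-projection argument does give $(1-p)\star\mu\le 1-p$ and hence $p\star\mu\ge p$, but passing from subharmonic to harmonic via $\psi(p\star\mu)=\psi(p)$ only works when $\psi(p)<\infty$, which you have no control over (indeed if your conclusion ``$\nu$ faithful'' held, then $p=1$ and $\psi(p)=\infty$, so the very step meant to establish faithfulness would fail). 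The subsequent appeal to uniqueness of the invariant weight is likewise only a heuristic: you would need an ergodic theorem identifying any finite $\star\mu$-invariant normal functional with a multiple of $\psi$, and no such result is at hand here, particularly in the non-Kac case.

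The paper avoids this obstacle entirely by a different mechanism. Before touching the states $\omega_n\star\mu^n$, it first proves that $\mu^{n_i+s}\to 0$ in $\sigma(l^1(\G),c_0(\G))$ along a subnet, for every $s$. The point is that a Banach-limit cluster $\nu$ of the sequence $(\mu^n)$ itself is \emph{two-sided} invariant, $\mu\star\nu=\nu\star\mu=\nu$; the left invariance $\mu\star\nu=\nu$ makes each $\nu\star x\in c_0(\G)$ a fixed point of $\mu\star(\cdot)$, and then irreducibility together with the Haar weight forces $\nu\star x=\|\nu\star x\|\,1$, hence $\nu\star x=0$ because $1\notin c_0(\G)$. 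Your $\nu$, coming from $\omega_n\star\mu^n$, has only the right invariance $\nu\star\mu=\nu$, so this Haar-weight trick is unavailable to it. With the weak$^*$ convergence in hand, the paper then takes the cluster point $\rho$ of $\omega_{n_i}\star\mu^{n_i}$ directly in the weak$^*$-compact unit ball of $l^1(\G)=c_0(\G)^*$ (no normal/singular splitting is needed), gets $\rho=\rho\star\mu^{nk}$ from Lemma~\ref{0-2}, and reaches a contradiction by pairing with $x$ and sending $\mu^{nk}$ to zero along the subnet. In short, the missing idea is to first kill the weak$^*$ limits of $\mu^n$ using their two-sided invariance, rather than trying to prove an abstract ``no finite invariant measure'' theorem.
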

\begin{proof}
We first show that there exists a subnet $(\mu^{n_i})_i$ such that
\begin{equation}\label{1-k}
\lim_{i}\,\mu^{n_i+s}\, =\,0
\end{equation}
in the $\sigma( l^1(\G)\,,\, c_0(\G))-$topology, for all $s\in\mathbb N$.
So, let $\mathcal{F}$ be a Banach limit, and define
\[
\nu \,=\, \text{weak*-}\lim_{\mathcal{F}}\,\mu^n\,\in l^1(\G)^+.
\]
Then $\nu\star\mu = \mu\star\nu = \nu$, and so
\[
\mu\star(\nu\star x)\, = \,(\mu\star\nu)\star x \,=\, \nu\star x
\]
for all $x\in c_0(\G)$.

Now, let $x\in c_0(\G)^+$, and suppose that $\eta\in\PG$ is such that
\[\langle\, \nu\star x\,,\, \eta\,\rangle \,=\, \|\nu\star x\|\,.\]
Then we get
\begin{eqnarray*}
\langle\, (\,\|\nu\star x\|\,1\, -\, \nu\star x\,)\,\star\eta\, ,\, \mu^n\,\rangle &=&
\langle\, \mu^n\star(\,\|\nu\star x\|\,1\, -\, \nu\star x\,)\, ,\, \eta\,\rangle\\ &=&
\langle\, \|\nu\star x\|\,1\, -\, \nu\star x \,,\, \eta\,\rangle\\ &=&
\|\nu\star x\|\,-\|\nu\star x\|
\, =\,0
\end{eqnarray*}
for all $n\in\mathbb N$.
Hence, it follows from the irreducibility of $\mu$, that
\[(\id\ot\eta)\,\Gamma(\,\|\nu\star x\|\,1\, -\, \nu\star x\,)\,=\,
(\,\| \nu \star x\|\,1\, -\, \nu\star x\,)\,\star\eta\,=\,
0\,,\]
and therefore, we obtain
\[
0 \,=\, \psi \big((\id\ot\eta)\,\Gamma(\,\|\nu\star x\|\,1\, -\, \nu\star x\,)\big) \,= \,
\psi \big(\,\|\nu\star x\|\,1\, -\, \nu\star x\,\big)\,,
\]
where $\psi$ is the right Haar weight.
Since $\|\nu\star x\|\,1\, -\, \nu\star x\,\geq\, 0$ and $\psi$ is faithful, it follows that
\[
\|\,\nu\star x\,\|\,1\, -\, \nu\star x\, =\, 0\,.
\]
But since $1\notin  c_0(\G)$, this shows that
$\nu\star x = 0$ for all $x\in c_0(\G)$. From this, we see that $\nu$ vanishes on
\[
\{\,(\id\ot\om)\,\Gamma(x)\,:\, \om\in c_0(\G)^*,\, x\in c_0(\G)\,\},
\]
and since the latter is dense in $ c_0(\G)$, we conclude that $\nu = 0$.
Hence, for each $0\leq x\in c_0(\G)$ we have
\begin{equation}\label{m1}
0\, \leq \,\liminf_n \,\langle\,x\,,\,\mu^n\,\rangle \,\leq\,\mathcal{F}\left(\langle\,x\,,\,\mu^n\,\rangle_{n=1}^{\infty}\right)\,=\,0\,.
\end{equation}
Now, let $x_1$, $x_2$, $\dots$, $x_n$ be positive elements in $ c_0(\G)$, and define
\[
x_0\,=\,\sum_{k=1}^n\,{x_k}\,\in\, c_0(\G)^+.
\]
Then, by (\ref{m1}), we can choose a subnet $(\mu^{n_j})$ of $(\mu^{n})$ such that
$\langle\,x_0,\mu^{n_j}\,\rangle\rightarrow 0$.
Thus, it follows that
\[
\lim_j\,\langle\,x_k\,,\,\mu^{n_j}\,\rangle\, = \, 0.
\]
for all $1\leq k \leq n$.
This shows that we can find a subnet $(\mu^{n_i})$ of $(\mu^{n})$ such that
$
\mu^{n_i} \, {\longrightarrow}\,0
$
in the $\sigma( l^1(\G)\,,\, c_0(\G))-$topology, whence (\ref{1-k}) follows.

Now, towards a contradiction, suppose that there exists $0\leq x\in c_0(\G)$ such that
the limit in (\ref{l2}) does not converge to zero.
Since the sequence $\|\,x\star \mu^n\,\|$
is positive and non-increasing, it has a limit. So, there exists $\alpha > 0$
such that
$
\|\,x\star \mu^n\,\|\,\geq \alpha
$
for all $n\in\mathbb{N}$, and therefore we can find $\om_n\in\PG$ such that
\begin{equation}\label{111}
\langle\,x\star \mu^n\,,\,\om_n\,\rangle\,\geq \alpha
\end{equation}
for all $n\in\mathbb{N}$. Now, assume that
\[
\rho\, =\, \lim_j\, \om_{n_{i_j}}\,\star\,\mu^{n_{i_j}}
\]
is a weak* cluster point of
$\{\om_{n_i}\star\mu^{n_i}\}$ in the unit ball of $ l^1(\G)$. Then (\ref{111}) implies that
$\langle x\,,\,\rho\,\rangle\geq \alpha\,$.
Moreover, by Lemma \ref{0-2}, there exists $k\in\mathbb{N}$ such that
\begin{eqnarray*}
\langle\,a\,,\,\rho\star\mu^k\,-\,\rho\,\rangle &=&
\lim_j\,\langle\,a\,,\,\om_{n_{i_j}}\star\mu^{n_{i_j}}\star\mu^{k}\,-\,\om_{n_{i_j}}\star\mu^{n_{i_j}}\,\rangle\\
&\leq&
\lim_j\, \|\,a\,\|\,\|\,\mu^{k+n_{i_j}}\,-\,\mu^{n_{i_j}}\,\|\,=\,0
\end{eqnarray*}
for all $a\in c_0(\G)$. Hence,
\[
\rho\,=\,\rho\star\mu^{k}\,=\,\rho\star\mu^{nk}
\]
for all $n\in\mathbb N$, which in particular, implies
\begin{equation}\label{113}
\alpha\,\leq\,\langle\,x\,,\,\rho\,\rangle\, = 
\,\langle\,x\,,\,\rho\star\mu^{nk}\,\rangle
\end{equation}
for all $n\in\mathbb N$.
But, on the other hand, from (\ref{1-k}) it follows that there exists $m\in\mathbb N$ such that
\[
\langle\,x\,,\,\rho\star\mu^{m+s}\,\rangle\,=\,\langle\,x\star\rho\,,\,\mu^{m+s}\,\rangle\, <\,\frac{\alpha}{2}
\]
for all $1\leq s\leq k$. Since $k$ divides one of $m+1$, $m+2$, \dots, $m+k$, this
contradicts (\ref{113}), and therefore finishes the proof.
\end{proof}

\noindent
{\bf Remark 1.}\
The proof of Lemma \ref{0-2} can be modified to prove
the statement for the case of a locally compact quantum group
$\G$ whose von Neumann algebra $\LL$ contains a non-zero central abelian
projection, and $\mu$ is a state on $\CZ$ with at least one non-singular convolution power.
Then the same proof yields Theorem \ref{HM} in this case; in particular,
this provides a new proof for the concentration functions problems in the
classical setting, for spread-out probability measures on locally compact groups.\\

\noindent
{\bf Remark 2.}\
The convergence of convolution powers $\mu^n$ to zero in the weak* topology,
in the classical case, holds for a more general class of probability measures $\mu$,
namely those whose support generates $G$ as a group, rather than a semigroup.
But this is not the case for Theorem \ref{HM}. For a counter-example, consider the
additive group of integers $(\mathbb Z\,,\,+)$, and the probability measure $\delta_1$,
the Dirac measure at $1$. Then the group generated by the support of $\delta_1$ is $\mathbb Z$,
but every convolution power $\delta_1^n = \delta_n$ induces an isometry on $c_0(\mathbb Z)$.

\bibliographystyle{plain}

\end{document}